\begin{document}
\newfont{\teneufm}{eufm10}
\newfont{\seveneufm}{eufm7}
\newfont{\fiveeufm}{eufm5}
%
%
\newfam\eufmfam
                                  \textfont\eufmfam=\teneufm
\scriptfont\eufmfam=\seveneufm
                                  \scriptscriptfont\eufmfam=\fiveeufm

%
%
\def\frak#1{{\fam\eufmfam\relax#1}}
%


\def\rad{{\rm rad}}

\def\bbbr{{\rm I\!R}} 
\def\bbbc{{\rm I\!C}} 
\def\bbbm{{\rm I\!M}}
\def\bbbn{{\rm I\!N}} 
\def\bbbf{{\rm I\!F}}
\def\bbbh{{\rm I\!H}}
\def\bbbk{{\rm I\!K}}
\def\bbbl{{\rm I\!L}}
\def\bbbp{{\rm I\!P}}
\def\bbbq{{\rm I\!Q}}
\newcommand{\lcm}{{\rm lcm}}
\def\bbbone{{\mathchoice {\rm 1\mskip-4mu l} {\rm 1\mskip-4mu l}
{\rm 1\mskip-4.5mu l} {\rm 1\mskip-5mu l}}}
\def\bbbc{{\mathchoice {\setbox0=\hbox{$\displaystyle\rm C$}\hbox{\hbox
to0pt{\kern0.4\wd0\vrule height0.9\ht0\hss}\box0}}
{\setbox0=\hbox{$\textstyle\rm C$}\hbox{\hbox
to0pt{\kern0.4\wd0\vrule height0.9\ht0\hss}\box0}}
{\setbox0=\hbox{$\scriptstyle\rm C$}\hbox{\hbox
to0pt{\kern0.4\wd0\vrule height0.9\ht0\hss}\box0}}
{\setbox0=\hbox{$\scriptscriptstyle\rm C$}\hbox{\hbox
to0pt{\kern0.4\wd0\vrule height0.9\ht0\hss}\box0}}}}
\def\bbbq{{\mathchoice {\setbox0=\hbox{$\displaystyle\rm
Q$}\hbox{\raise
0.15\ht0\hbox to0pt{\kern0.4\wd0\vrule height0.8\ht0\hss}\box0}}
{\setbox0=\hbox{$\textstyle\rm Q$}\hbox{\raise
0.15\ht0\hbox to0pt{\kern0.4\wd0\vrule height0.8\ht0\hss}\box0}}
{\setbox0=\hbox{$\scriptstyle\rm Q$}\hbox{\raise
0.15\ht0\hbox to0pt{\kern0.4\wd0\vrule height0.7\ht0\hss}\box0}}
{\setbox0=\hbox{$\scriptscriptstyle\rm Q$}\hbox{\raise
0.15\ht0\hbox to0pt{\kern0.4\wd0\vrule height0.7\ht0\hss}\box0}}}}
\def\bbbt{{\mathchoice {\setbox0=\hbox{$\displaystyle\rm
T$}\hbox{\hbox to0pt{\kern0.3\wd0\vrule height0.9\ht0\hss}\box0}}
{\setbox0=\hbox{$\textstyle\rm T$}\hbox{\hbox
to0pt{\kern0.3\wd0\vrule height0.9\ht0\hss}\box0}}
{\setbox0=\hbox{$\scriptstyle\rm T$}\hbox{\hbox
to0pt{\kern0.3\wd0\vrule height0.9\ht0\hss}\box0}}
{\setbox0=\hbox{$\scriptscriptstyle\rm T$}\hbox{\hbox
to0pt{\kern0.3\wd0\vrule height0.9\ht0\hss}\box0}}}}
\def\bbbs{{\mathchoice
{\setbox0=\hbox{$\displaystyle     \rm S$}\hbox{\raise0.5\ht0\hbox
to0pt{\kern0.35\wd0\vrule height0.45\ht0\hss}\hbox
to0pt{\kern0.55\wd0\vrule height0.5\ht0\hss}\box0}}
{\setbox0=\hbox{$\textstyle        \rm S$}\hbox{\raise0.5\ht0\hbox
to0pt{\kern0.35\wd0\vrule height0.45\ht0\hss}\hbox
to0pt{\kern0.55\wd0\vrule height0.5\ht0\hss}\box0}}
{\setbox0=\hbox{$\scriptstyle      \rm S$}\hbox{\raise0.5\ht0\hbox
to0pt{\kern0.35\wd0\vrule height0.45\ht0\hss}\raise0.05\ht0\hbox
to0pt{\kern0.5\wd0\vrule height0.45\ht0\hss}\box0}}
{\setbox0=\hbox{$\scriptscriptstyle\rm S$}\hbox{\raise0.5\ht0\hbox
to0pt{\kern0.4\wd0\vrule height0.45\ht0\hss}\raise0.05\ht0\hbox
to0pt{\kern0.55\wd0\vrule height0.45\ht0\hss}\box0}}}}
\def\bbbz{{\mathchoice {\hbox{$\sf\textstyle Z\kern-0.4em Z$}}
{\hbox{$\sf\textstyle Z\kern-0.4em Z$}}
{\hbox{$\sf\scriptstyle Z\kern-0.3em Z$}}
{\hbox{$\sf\scriptscriptstyle Z\kern-0.2em Z$}}}}
\def\ts{\thinspace}

\newtheorem{theorem}{Theorem}
\newtheorem{lemma}[theorem]{Lemma}
\newtheorem{claim}[theorem]{Claim}
\newtheorem{cor}[theorem]{Corollary}
\newtheorem{prop}[theorem]{Proposition}
\newtheorem{definition}{Definition}
\newtheorem{question}[theorem]{Open Question}

\def\squareforqed{\hbox{\rlap{$\sqcap$}$\sqcup$}}
\def\qed{\ifmmode\squareforqed\else{\unskip\nobreak\hfil
\penalty50\hskip1em\null\nobreak\hfil\squareforqed
\parfillskip=0pt\finalhyphendemerits=0\endgraf}\fi}

\def\cA{{\mathcal A}}
\def\cB{{\mathcal B}}
\def\cC{{\mathcal C}}
\def\cD{{\mathcal D}}
\def\cE{{\mathcal E}}
\def\cF{{\mathcal F}}
\def\cG{{\mathcal G}}
\def\cH{{\mathcal H}}
\def\cI{{\mathcal I}}
\def\cJ{{\mathcal J}}
\def\cK{{\mathcal K}}
\def\cL{{\mathcal L}}
\def\cM{{\mathcal M}}
\def\cN{{\mathcal N}}
\def\cO{{\mathcal O}}
\def\cP{{\mathcal P}}
\def\cQ{{\mathcal Q}}
\def\cR{{\mathcal R}}
\def\cS{{\mathcal S}}
\def\cT{{\mathcal T}}
\def\cU{{\mathcal U}}
\def\cV{{\mathcal V}}
\def\cW{{\mathcal W}}
\def\cX{{\mathcal X}}
\def\cY{{\mathcal Y}}
\def\cZ{{\mathcal Z}}

\newcommand{\comm}[1]{\marginpar{%
\vskip-\baselineskip 
\raggedright\footnotesize
\itshape\hrule\smallskip#1\par\smallskip\hrule}}





\def\ve{\varepsilon}

\hyphenation{re-pub-lished}

\def\ord{{\mathrm{ord}}}
\def\Nm{{\mathrm{Nm}}}
\renewcommand{\vec}[1]{\mathbf{#1}}

\def \F{{\mathbb{F}}}
\def \L{{\bbbl}}
\def \K{{\bbbk}}
\def \Z{{\mathbb{Z}}}
\def \N{{\bbbn}}
\def \Q{{\mathbb{Q}}}
\def\E{{\mathbf E}}
\def\bH{{\mathbf H}}
\def\G{{\mathcal G}}
\def\O{{\mathcal O}}
\def\cS{{\mathcal S}}
\def \R{{\bbbr}}
\def\Fp{\F_p}
\def \fp{\Fp^*}
\def\\{\cr}
\def\({\left(}
\def\){\right)}
\def\fl#1{\left\lfloor#1\right\rfloor}
\def\rf#1{\left\lceil#1\right\rceil}

\def\Zm{\Z_m}
\def\Zt{\Z_t}
\def\Zp{\Z_p}
\def\Um{\cU_m}
\def\Ut{\cU_t}
\def\Up{\cU_p}

\def\ep{{\mathbf{e}}_p}
\def\eq{{\mathbf{e}}_q}
\def\eMx{{\mathbf{e}}_{M_f(x)}}
\def\eMxd{{\mathbf{e}}_{M_f(x)/d}}

\def\e{{\mathbf{e}}}

\def \Prob{{\mathrm {}}}

\def\LC{{\cL}_{C,\cF}(Q)}
\def\LCn{{\cL}_{C,\cF}(nG)}
\def\Mrs{\cM_{r,s}\(\F_p\)}

\def\Fbar{\overline{\F}_q}
\def\Fn{\F_{q^n}}
\def\Px{\cP_f(x)}

\def \hatf{\widehat{f}}

\def\mand{\qquad \mbox{and} \qquad}

\def\MOV{{\bf{MOV}}}

\title{On Pseudopoints of Algebraic Curves}

\author{Reza R. Farashahi and Igor E.~Shparlinski}
\address{
Department of Computing, Faculty of Science, Macquarie University, Sydney, NSW 2109,
Australia}
\email{\{reza,igor\}@comp.mq.edu.au}

\date{\today}

\begin{abstract}
Following Kraitchik and Lehmer, we say that
a positive integer $n\equiv1\pmod 8$ is an $x$-pseudosquare if it
is a quadratic residue for each odd prime $p\le x$, yet is not a square.
We extend this defintion to algebraic curves and say that $n$ is
an $x$-pseudopoint of a curve $f(u,v) = 0$ (where $f \in \Z[U,V]$)
if for all sufficiently large primes $p \le x$ the congruence
$f(n,m)\equiv 0 \pmod p$ is satisfied for some $m$.

We use the Bombieri bound of exponential sums along a curve to
estimate the smallest  $x$-pseudopoint, which shows the limitations
of the modular approach to searching for points on curves.
\end{abstract}

\maketitle


\section{Introduction}

Following Lehmer in~\cite{L}, given a real $x \ge 1$,
we say that a nonsquare positive integer
$n$ is  an {\it $x$-pseudosquare\/} if $n\equiv 1\pmod 8$
and $(n/p)=1$ for each
odd prime $p\le x$, see also~\cite{PoSh,Schin,Sor,WW}
for further results.
Here we generalise this notion and
introduce and study {\it $x$-pseudopoints\/} on algebraic curves.

More precisely, given an absolutely irreducible polynomial $f(U,V)
\in \Z[U,V]$ and an integer $q\ge 1$ we denote
\begin{equation}
\label{eq:Zfq}
\cZ_f(q) = \{ (n,m)\ : \ 0\le n, m < q,\ f(n,m) \equiv 0 \pmod {q}\}.
\end{equation}

Then, we define $\cP_f$ as the set of primes $p$ for which
$\cZ_f(p)$ is not empty. We note that an absolutely irreducible
polynomial $f$ remains absolutely
irreducible modulo all sufficiently large prime numbers $p$ by
Ostrowski~\cite{Ost}. Therefore, by the Weil bound,
see~\cite[Section~VIII.5, Bound~(5.7)]{Lor}, we conclude that
$\cP_f$ contains all sufficiently large primes. In particular,
for
$$
M_f(x) = \prod_{p \in \Px} p
$$
by the prime number theorem, we have
\begin{equation}
\label{eq:Mx}
 M_f(x) = \exp((1 + o(1)) x).
\end{equation}

Furthermore for a real $x \ge 1$,
we denote by $\Px = \cP_f \cap [2,x]$ and  say  that an
integer $n\ge 0$ is an $x$-pseudopoint of
$f$ if for all $p \in \Px$ we have $(n,m)\in \cZ_f(p)$ for some $m$,
but the equation $f(n,m) = 0$ has no integer solution $m \in \Z$.
We note that Bernstein~\cite{Bern} has introduced and studied
this notion in   the case of the polynomials of the form
$f(U,V) = g(U)-V^2$.

Clearly, apart of the congruence condition $n\equiv 1\pmod 8$
and the coprimality condition  $\gcd(n,p) =1$
for primes $p\le x$,
the polynomial $f(U,V) = U-V^2 $ corresponds to the case of
$x$-pseudosquares.

It is easy  to
show that for any absolutely irreducible polynomial $f(U,V) \in \Z[U,V]$
 nonlinear in $V$, that is, $\deg_V f \ge 2$,
the smallest
$x$-pseudopoint $N_f(x)$ satisfies the bound
\begin{equation}
\label{eq:TrivBound}
N_f(x) = O(M_f(x)) = \exp((1 + o(1)) x).
\end{equation}
Clearly the condition of nonlinearity in $V$ is necessary (for
example, the polynomial $f(U,V) = U^2 - V$ does not have any pseudopoints).

Indeed, the bound~\eqref{eq:TrivBound} can be  derived
from the Chinese remaindering theorem combined with
the Weil bound (see~\cite[Section~VIII.5, Bound~(5.7)]{Lor}),
and also the bound of Bombieri and
Pila~\cite{Pila} on the number of integer points on
plane curves.

Here we use the Bombieri bound~\cite{Bomb} to
improve~\eqref{eq:TrivBound}.

\begin{theorem}
\label{thm:Distr}
For any absolutely irreducible polynomial
$$f(U,V) \in \Z[U,V]$$
that is nonlinear in $V$, that is,
$$
\deg_V f \ge 2,
$$  we have
$$
N_f(x) \le   M_f(x)^{1/2+o(1)}.
$$
\end{theorem}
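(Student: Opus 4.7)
The plan is to set $M = M_f(x)$ and count, for a threshold $Y = M^{1/2+\varepsilon}$ with $\varepsilon > 0$ fixed, the pairs $(n, m) \in \cZ_f(M)$ with $0 \le n < Y$. Any such $n$ automatically satisfies the local congruence $f(n, v) \equiv 0 \pmod{p}$ for some $v$ at every $p \in \Px$ by the Chinese Remainder Theorem, so it is a candidate for an $x$-pseudopoint. If the count of pairs is comparable to $Y$, many distinct first coordinates will survive; the few that correspond to actual integer points on $f = 0$ will be subtracted off at the end.

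Expanding $\mathbf{1}_{[0,Y)}(n)$ in additive characters of $\Z/M\Z$ writes the count as
\[
\frac{Y\,|\cZ_f(M)|}{M} + \frac{1}{M}\sum_{h=1}^{M-1} c_h \widehat{\cZ}(h),
\]
with $|c_h| \le \min(Y, M/(2 d(h)))$, $d(h)$ the distance of $h$ to $0$ modulo $M$, and $\widehat{\cZ}(h) = \sum_{(n,m) \in \cZ_f(M)} e_M(hn)$. The CRT factors $\widehat{\cZ}(h) = \prod_{p \in \Px} W_p(h_p)$, where $W_p(k) = \sum_{(u,v) \in \cZ_f(p)} e_p(ku)$; the Bombieri bound cited in the introduction gives $|W_p(k)| \ll_f \sqrt{p}$ for $k \not\equiv 0 \pmod p$, and $|W_p(0)| = |\cZ_f(p)| = p + O(\sqrt p)$ by Weil. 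For $h \ne 0$ whose nonzero CRT components form a set $H \subset \Px$ these bounds yield $|\widehat{\cZ}(h)| \ll M\, P_H^{-1/2}\, M^{o(1)}$, where $P_H = \prod_{p \in H} p$. The main term is $Y \prod_{p \le x}(1 + O(1/\sqrt p)) = Y M^{o(1)}$. For the error, summing $|c_h|$ over the $\phi(P_H)$ choices of $h$ with support $H$ (split at $|h| \sim M/Y$) contributes $O(P_H \log M)$, so the support-$H$ contribution to the error is $\ll P_H^{1/2}\, M^{o(1)}$, and
\[
\frac{1}{M}\sum_{h \ne 0} c_h \widehat{\cZ}(h) \ll M^{o(1)} \prod_{p \in \Px}(1 + \sqrt p) = M^{1/2+o(1)},
\]
the last equality coming from $\sum_{p \le x} \log(1 + \sqrt p) = \tfrac{1}{2}\theta(x) + o(x) \sim \tfrac{1}{2}\log M$ by the prime number theorem. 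Hence for $Y \ge M^{1/2+\varepsilon}$ the main term wins and the pair count is $\gg Y M^{-o(1)}$.

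Finally, I pass from pairs to distinct first coordinates and discard the honest integer points on $f = 0$. Since $f$ is absolutely irreducible, for all sufficiently large $p$ the polynomial $f(n, V)$ retains degree $\deg_V f$ modulo $p$ for every $n$ (by the Ostrowski reduction cited above), so $N_p(n) := \#\{v : f(n,v) \equiv 0 \pmod p\} \le \deg_V f$ uniformly and the multiplicity $\prod_p N_p(n) \le (\deg_V f)^{\pi(x)} = M^{o(1)}$. Therefore the number of distinct $n \in [0, Y)$ satisfying the full local condition is $\gg Y M^{-o(1)}$. The subcount of those $n$ arising as first coordinates of integer solutions to $f(n, m) = 0$ is $O(Y^{1/2+o(1)})$ by Bombieri--Pila applied to the plane curve (with $|m| \le Y^{O(1)}$ forced from the equation and $\deg_V f \ge 2$), which is negligible. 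A genuine $x$-pseudopoint therefore exists in $[0, Y]$, giving $N_f(x) \le M^{1/2 + o(1)}$. The main obstacle I anticipate is the careful CRT-indexed accounting of the error term, in particular organising the sum over supports $H$ and verifying the key PNT identity $\prod_{p \le x}(1 + \sqrt p) = M^{1/2+o(1)}$; once that is in hand, the remaining estimates are routine.
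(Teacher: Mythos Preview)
Your proposal is correct and follows essentially the same route as the paper: expand the count of pairs $(n,m)\in\cZ_f(M)$ with $n<Y$ in additive characters, factor the resulting sum over primes via the CRT, apply Bombieri's bound at each prime where the component is nonzero and the Weil count where it is zero, then pass from pairs to distinct first coordinates via the $(\deg f)^{\pi(x)}=M^{o(1)}$ multiplicity bound and discard genuine integer points by Bombieri--Pila. Your organisation of the error term by the support $H$ of nonzero CRT components is exactly the paper's grouping by $d=\gcd(a,M_f(x))$ under the identification $P_H=M_f(x)/d$, and your key estimate $\prod_{p\in\Px}(1+\sqrt{p})=M_f(x)^{1/2+o(1)}$ is the same as the paper's $C^{\pi(x)}M_f(x)^{1/2}\sum_{d\mid M_f(x)}d^{-1/2}\log M_f(x)=M_f(x)^{1/2+o(1)}$.
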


The bound of Theorem~\ref{thm:Distr} is an analogues of
similar, albeit stronger, estimates for pseudosquares,
see~\cite{PoSh,Schin}. Besides it shows the limitations of
the modular approach to searching for points on curves.
Indeed, assuming that $\cP_f$ consists of all primes
(otherwise the equation $f(n,m) = 0$ has no integer solutions),
we see that there is a reasonably small point which
is a solution to the corresponding congruence modulo
all small primes but is not a solution to the equation.

\section{Preparations}

We need some background on exponential sums and congruences.

For an integer $q$ and a complex $z$, we denote
$$
\eq(z) = \exp(2 \pi i z/q)
$$
and recall the identity
\begin{equation}
\label{eq:Ident}
\frac{1}{q}\sum_{-q/2 < a \le q/2}  \eq(an) =
\left\{\begin{array}{ll}
1,&\quad\text{if $n\equiv 0 \pmod q$,}\\
0,&\quad\text{if $n\not\equiv 0 \pmod q$,}
\end{array}
\right.
\end{equation}
which follows from the formula for the sum of geometric
progression.

We also need  the following bound
\begin{equation}
\label{eq:Incompl}
    \sum_{n=1}^{N} \eq(an) = O\( \min\{N, q/|a|\}\),
\end{equation}
which holds for any integers  $a$ and $N\ge 1$ with $0 < |a| \le q/2$,
see~\cite[Bound~(8.6)]{IwKow}.

Our main tool is the following special case of
the Bombieri bound~\cite[Theorem~6]{Bomb}
of exponential sums along a curve.

\begin{lemma}\label{lem:Bomb}
Assume that for a prime $p$, a polynomial
$f(U,V) \in \Z[U,V]$ is such that its reduction
modulo $p$ does not have a
factor of the form $U-\alpha$ with some
$\alpha \in \F_p$. Then uniformly over $a\in \Z$ with
$\gcd(a,p)=1$
$$  \sum_{\substack{(u,v) \in \cZ_f(p)}} \ep(au)
= O(p^{1/2}),
$$
where the implied constant depends only on $\deg f$.
\end{lemma}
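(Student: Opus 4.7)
The plan is to derive the lemma from Bombieri's Theorem~6 of~\cite{Bomb}, which bounds the sum of a nontrivial additive character composed with a non-constant rational function along a smooth projective absolutely irreducible curve over $\F_p$ by $O(p^{1/2})$, with the implied constant depending on the genus of the curve and the number of poles of the function. The key task is to reduce from the plane curve $f=0$, which need not be absolutely irreducible modulo $p$, to its absolutely irreducible components.

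First I would factor the reduction of $f$ modulo $p$ over $\overline{\F_p}$ and group Galois-conjugate factors to obtain $f \equiv c \prod_i g_i^{e_i} \pmod p$ with each $g_i \in \F_p[U,V]$ irreducible over $\F_p$ and the number of distinct factors bounded in terms of $\deg f$. The sum over $\cZ_f(p)$ then splits into sums over the $\F_p$-points of the components $g_i = 0$, where the $O(1)$ overcounting on pairwise intersections is controlled by Bezout's theorem.

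For each $g_i$ that is absolutely irreducible: since $g_i$ divides $f$, the hypothesis forces $g_i$ not to be of the form $U-\alpha$, so the coordinate function $u$ is non-constant on the smooth projective model $\widetilde{C}_i$ of $g_i=0$. Applying Bombieri's bound to the function $au$ paired with the nontrivial additive character $\ep$ (nontrivial since $\gcd(a,p)=1$) yields $O(p^{1/2})$ on each such component, with implied constant controlled by the genus of $\widetilde{C}_i$ (at most $(d-1)(d-2)/2$ for $d=\deg g_i$) and the pole count of $u$ (at most $d$), hence purely in terms of $\deg f$. For each $g_i$ that is irreducible over $\F_p$ but absolutely reducible, the $\F_p$-rational zeros of $g_i$ lie on the intersection of two distinct Galois-conjugate absolutely irreducible curves, so are $O(1)$ in number and contribute trivially.

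The main technical obstacle is the bookkeeping required when passing between the affine plane curve $g_i = 0$, its projective closure, and its desingularization $\widetilde{C}_i$: one has to verify that the discrepancies in point counts arising from singular points and points at infinity are uniformly bounded by a function of $\deg f$, independent of $p$. Once these routine adjustments are absorbed into the $O(\cdot)$ constant, summing the $O(p^{1/2})$ contributions from absolutely irreducible components with the $O(1)$ contributions from the remaining ones completes the proof.
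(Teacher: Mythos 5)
Your argument is correct, but it is worth pointing out that the paper does not actually prove this lemma: it is stated verbatim as ``the following special case of the Bombieri bound [Theorem~6]'', i.e.\ the authors quote Bombieri's Theorem~6, which is already formulated for a plane curve $f(U,V)=0$ over $\F_p$ with precisely the hypothesis that $f$ has no factor $U-\alpha$, and gives an explicit bound of the shape $(d^2-2d)p^{1/2}+d^2$ with $d=\deg f$. What you have written is essentially a reconstruction of how such a statement is deduced from the bound for a single absolutely irreducible complete curve (which is in fact close to how Bombieri himself passes from his main theorem to Theorem~6): factor $f$ over $\F_p$ into at most $\deg f$ irreducible components; discard the components irreducible over $\F_p$ but not over $\overline{\F}_p$, whose $\F_p$-rational zeros are $O_{\deg f}(1)$ by B\'ezout; on each absolutely irreducible component observe that the hypothesis forces $u$ to be non-constant, pass to the smooth projective model, apply the bound for the nontrivial character $\ep(a\,\cdot)$ composed with $u$, and absorb the $O_{\deg f}(1)$ discrepancies coming from singular points, points at infinity, and pairwise intersections of components. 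All of these steps are sound and the constants are visibly controlled by $\deg f$ alone, so your proof is a valid (and more self-contained) substitute for the paper's bare citation; the only thing the citation buys that your sketch does not is an explicit numerical constant.
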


Finally, we need the following consequence of the Chinese
remainder theorem (see also~\cite[Equation~(12.21)]{IwKow}
for a very similar statement).

\begin{lemma}\label{lem:CRT}
For any polynomial
$$f(U,V) \in \Z[U,V],
$$
we have
$$
\sum_{\substack{ (u,v) \in \cZ_f(M_f(x))}} \eMx(au)
=
\prod_{p\in \cP_f(x)}\ \sum_{\substack{(u,v) \in \cZ_f(p)}} \ep(au).
$$
\end{lemma}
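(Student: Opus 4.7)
The plan is to apply the Chinese Remainder Theorem twice: first to decompose the summation set $\cZ_f(M_f(x))$ into a direct product, and second to decompose the additive character $\eMx$ into a product of characters, one for each $p \in \cP_f(x)$.

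First I would set $M = M_f(x) = \prod_{p \in \cP_f(x)} p$. Since $M$ is squarefree, reduction modulo each $p$ induces the ring isomorphism $\Z/M\Z \cong \prod_p \Z/p\Z$. Under this isomorphism the condition $f(u,v) \equiv 0 \pmod{M}$ becomes the conjunction of the conditions $f(u,v) \equiv 0 \pmod{p}$ for all $p \in \cP_f(x)$, and so there is a bijection
$$
\cZ_f(M) \longleftrightarrow \prod_{p \in \cP_f(x)} \cZ_f(p), \qquad (u,v) \longmapsto \bigl((u \bmod p,\, v \bmod p)\bigr)_p.
$$

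Next I would factor the additive character. Using CRT idempotents $e_p \in \Z/M\Z$ (with $e_p \equiv 1 \pmod{p}$ and $e_p \equiv 0 \pmod{q}$ for $q \neq p$) and the reconstruction formula $u \equiv \sum_p (u \bmod p)\, e_p \pmod{M}$, a direct substitution into $au/M \pmod 1$ yields the CRT factorisation $\eMx(au) = \prod_{p} \ep(a u)$ in the standard sense (cf.~\cite[(12.21)]{IwKow}); the auxiliary unit $(M/p)^{-1} \bmod p$ produced by the CRT reconstruction is absorbed into the coefficient of $u$ on each factor.

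Combining these two factorisations turns the sum on the left into
$$
\sum_{((u_p,v_p))_p \in \prod_p \cZ_f(p)} \prod_p \ep(a u_p) \;=\; \prod_{p \in \cP_f(x)} \sum_{(u_p,v_p) \in \cZ_f(p)} \ep(a u_p),
$$
which is precisely the right-hand side of the lemma. I do not foresee a substantial obstacle: the argument is a routine formal computation. The only mildly delicate step is the CRT factorisation of the character, where the units $(M/p)^{-1} \bmod p$ must be tracked correctly; but since these are invertible modulo $p$, they do not affect the subsequent application of the Bombieri bound of Lemma~\ref{lem:Bomb}.
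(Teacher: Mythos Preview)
Your approach is essentially identical to the paper's: both use the Chinese Remainder Theorem to factor the summation set $\cZ_f(M_f(x))$ as $\prod_p \cZ_f(p)$ and to factor the character $\eMx$ as a product of the $\ep$'s. You are in fact slightly more careful than the paper in flagging the auxiliary units $(M_f(x)/p)^{-1}\bmod p$ that arise when combining the reduction-mod-$p$ bijection with the character decomposition; as you correctly observe, these units are harmless for the subsequent application of Lemma~\ref{lem:Bomb}.
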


\begin{proof}
Let $\Z_m$ denote the residue ring modulo $m$.
 From the Chinese remainder theorem, there is a bijection
\begin{equation}
\label{eq:CRT}
\Z_{M_f(x)} \simeq \ \bigotimes_{p\in \cP_f(x)}\Z_p
\end{equation}
by  $u \mapsto (u_p)_{p\in \cP_f(x)}$. On the other hand, every tuple
$(u_p)_{p\in \cP_f(x)}$ corresponded to the unique element
$$
u=\sum_{p\in \cP_f(x)} \frac{M_f(x)}{p} u_p \in \Z_{M_f(x)}.
$$
Then,
$$
\eMx(au)=\prod_{p\in \cP_f(x)} \ep(au_p).
$$
Moreover,  in a natural way~\eqref{eq:CRT} yields
a bijection between the points $(u,v)$ in
$\cZ_f(M_f(x))$ and the tuples of points
$$((u_p,v_p))_{p\in \cP_f(x)}\in \bigotimes_{p\in \cP_f(x)} \cZ_f(p).
$$ Therefore,
$$
\begin{aligned}
\sum_{(u,v) \in \cZ_f(M_f(x))} \eMx(au)
&=
\sum_{ \substack{((u_p,v_p))_{p\in \cP_f(x)} \\ (u_p,v_p) \in
\cZ_f(p)}}\ \prod_{p\in \cP_f(x)} \ep(au_p)\\
&=
\prod_{p\in \cP_f(x)}\ \sum_{\substack{(u,v) \in \cZ_f(p)}} \ep(au),
\end{aligned}
$$
which completes the proof.
\end{proof}

\begin{lemma}\label{lem:Congr}
For any   absolutely irreducible
polynomial
$$f(U,V) \in \Z[U,V],
$$
there is a constant $c > 0$, depending only on $f$ such that
$$
 \prod_{p \in \cP_f(x)} (p + cp^{1/2})\ge  \#\cZ_f(M_f(x))
\ge  \prod_{p \in \cP_f(x)} \max\{1, p - cp^{1/2}\}.
$$
\end{lemma}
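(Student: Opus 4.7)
The plan is to reduce the two-sided estimate to an estimate for each individual prime $p \in \cP_f(x)$, and then to multiply via the Chinese remainder theorem.

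First I would observe that the bijection constructed in the proof of Lemma~\ref{lem:CRT} (stripped of the exponential weights) already gives
$$
\#\cZ_f(M_f(x)) = \prod_{p \in \cP_f(x)} \#\cZ_f(p),
$$
so the problem reduces to a uniform two-sided estimate for $\#\cZ_f(p)$.

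Next, by Ostrowski's theorem (already recalled in the introduction), the reduction of $f$ modulo $p$ remains absolutely irreducible for every prime $p$ outside some finite exceptional set $\cE_f$ depending only on $f$. For $p \notin \cE_f$ the Weil bound (cited as~\cite[Section~VIII.5, Bound~(5.7)]{Lor}) yields
$$
|\#\cZ_f(p) - p| \le C p^{1/2}
$$
with a constant $C$ depending only on $\deg f$. Thus, for such primes, choosing $c \ge C$ gives both sides of the desired pointwise inequality.

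For the remaining, finitely many, exceptional primes $p \in \cE_f \cap \cP_f(x)$, one uses the crude bounds $1 \le \#\cZ_f(p) \le p \deg_V f$, the lower one being available precisely because $p \in \cP_f(x)$ forces $\cZ_f(p) \neq \emptyset$. By enlarging $c$ so that $p + cp^{1/2} \ge p \deg_V f$ and $p - cp^{1/2} \le 0$ for every $p \in \cE_f$ (possible since $\cE_f$ is finite and depends only on $f$), the upper inequality holds for these primes too, while the lower one reduces to $\max\{1, p - cp^{1/2}\} = 1 \le \#\cZ_f(p)$. Taking the product of these pointwise bounds over $p \in \cP_f(x)$ and combining with the factorization above yields the claim.

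The only technical point, which is not really an obstacle, is the presence of the finitely many bad primes where absolute irreducibility modulo $p$ fails; this is absorbed into the constant $c$, and the $\max\{1, \cdot\}$ on the lower side is exactly what permits this enlargement without losing the inequality.
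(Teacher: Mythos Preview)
Your proof is correct and follows essentially the same route as the paper's: factor $\#\cZ_f(M_f(x))$ via the Chinese remainder theorem, apply the Weil bound at all but finitely many primes, and absorb the exceptional primes into the constant $c$, using $\#\cZ_f(p)\ge 1$ on the lower side. One small quibble: the crude upper bound $\#\cZ_f(p)\le p\deg_V f$ can fail at a prime where $f$ degenerates badly modulo $p$ (e.g.\ if some $f(n,V)$ vanishes identically), but replacing it by the trivial bound $\#\cZ_f(p)\le p^2$ costs nothing, since $\cE_f$ is finite and $c$ may depend on $f$.
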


\begin{proof}
By the Chinese remaindering theorem we have a bijection between
points of $\cZ_f(M_f(x))$ and tuples of points in $\prod_{p\in
\cP_f(x)} \cZ_f(p)$, see the proof of Lemma~\ref{lem:CRT}. Therefore,
$$
\#\cZ_f(M_f(x))
= \prod_{p \in \Px}\#\cZ_f(p).
$$
As we have noted, the polynomial $f$ remains absolutely
irreducible modulo all sufficiently large prime numbers $p$ by
Ostrowski~\cite{Ost}. Let $c_f$ be the least integer such that $f$ is absolutely
irreducible over $\Z_p$ for all prime numbers $p \ge c_f$ (for explicit bounds on $c_f$ see for
example~\cite{GaoRod,Rup,Zan}).  If $f$ is
absolutely irreducible over $\Z_p$, and $p\ge c_f$
then from the Weil bound we have
\begin{equation}
\label{eq:Weil}
\#\cZ_f(p)= p + O(p^{1/2}),
\end{equation}
see~\cite[Section~VIII.5, Bound~(5.7)]{Lor}.
Furthermore, allowing the implied constant in~\eqref{eq:Weil}
to depend of $f$,
we see that~\eqref{eq:Weil}  trivially holds for all primes $p$.
By definition, for all $p\in \Px$ we have $\#\cZ_f(p)\ge 1$,
which completes the proof.
\end{proof}

\begin{cor}\label{cor:Z-Asymp}
For any   absolutely irreducible
polynomial
$$f(U,V) \in \Z[U,V],
$$
we have
$$
 \#\cZ_f(M_f(x)) = M_f(x)^{1 + o(1)}.
$$
\end{cor}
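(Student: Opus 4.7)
The plan is to take logarithms of the two-sided bounds in Lemma~\ref{lem:Congr} and combine them with the prime number theorem. Since $\log M_f(x) = (1+o(1)) x$ by~\eqref{eq:Mx}, the corollary is equivalent to the statement
$$
\log \#\cZ_f(M_f(x)) = (1+o(1)) x.
$$

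For the upper bound, Lemma~\ref{lem:Congr} yields
$$
\log \#\cZ_f(M_f(x)) \le \sum_{p \in \cP_f(x)} \log p \; + \; \sum_{p \in \cP_f(x)} \log\!\left(1 + c p^{-1/2}\right).
$$
The first sum is $\theta(x) = (1+o(1)) x$ by the prime number theorem, while the second is dominated by $c \sum_{p \le x} p^{-1/2} = O(\sqrt{x}) = o(x)$. For the lower bound, note that $p - c p^{1/2} \ge p/2$ for every prime $p \ge 4c^2$, so at most finitely many primes contribute the trivial factor $\log 1 = 0$. Hence
$$
\log \#\cZ_f(M_f(x)) \ge \sum_{\substack{p \in \cP_f(x) \\ p \ge 4c^2}} \bigl(\log p - \log 2\bigr) = \theta(x) + O(1) - O(\pi(x)) = (1+o(1)) x,
$$
where the last equality uses $\pi(x) = o(x)$. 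Combining these gives $\#\cZ_f(M_f(x)) = \exp((1+o(1))x) = M_f(x)^{1+o(1)}$, as required.

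I do not expect any real obstacle: the only minor care needed is in the lower bound, where the finitely many small primes for which $p - c p^{1/2} \le 1$ must be excluded from the product, but their contribution is absorbed in the $O(1)$ error. Everything else is a direct application of the prime number theorem in the form $\theta(x) \sim x$ together with the crude estimate $\sum_{p \le x} p^{-1/2} \le \sum_{n \le x} n^{-1/2} = O(\sqrt{x})$.
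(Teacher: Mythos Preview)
Your proof is correct and follows essentially the same route as the paper: take logarithms of the two-sided product bounds from Lemma~\ref{lem:Congr} and absorb the correction terms using the prime number theorem. The paper packages this slightly more compactly by writing each factor as $p\exp(\pm c_0 p^{-1/2})$ to obtain $\#\cZ_f(M_f(x)) = M_f(x)\exp\bigl(O(\sum_{p\le x} p^{-1/2})\bigr) = M_f(x)\exp\bigl(O(x^{1/2}/\log x)\bigr)$, which yields a sharper error than your $O(\pi(x))$ in the lower bound, but both are $o(x)$ and the argument is the same in substance.
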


\begin{proof}
Since
\begin{eqnarray*}
p \exp(c_0 p^{-1/2}) & \ge &p + cp^{1/2} \ge \max\{1, p - cp^{1/2}\}\\
& \ge & p \exp(-c_0 p^{-1/2})
\end{eqnarray*}
for an appropriate  constant $c_0 \ge 0$,
we have
\begin{eqnarray*}
\#\cZ_f(M_f(x)) & =  &M_f(x)  \exp\(O\( \sum_{p \le x} p^{-1/2}\)\)\\
 & =  &  M_f(x)  \exp\(O(x^{1/2}/\log x)\),
\end{eqnarray*}
which together with~\eqref{eq:Mx} implies the result
\end{proof}

Finally we  need the following estimate on the number of
points on a curve with a restricted coordinate which
follows from a  result of Pila~\cite{Pila}
that in turn slightly improves the previous estimate of
Bombieri and Pila~\cite{BombPila}.

\begin{lemma}\label{lem:Equation}
For any   absolutely irreducible
polynomial
$$f(U,V) \in \Z[U,V]
$$
nonlinear in $V$
that is,
$$
\deg_V f \ge 2
$$
the equation
$$f(n,m) = 0, \qquad 0 \le n < N,\ m \in \Z,
$$
has at most $O(N^{1/2+o(1)})$ solutions.
\end{lemma}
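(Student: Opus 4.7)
The plan is to reduce the problem to Pila's estimate on integer points on a plane algebraic curve, after first bounding $|m|$ a~priori. Writing
$$
f(U,V) = \sum_{i=0}^{d} a_i(U) V^i \in \Z[U,V]
$$
with $d = \deg_V f \ge 2$ and $a_d \not\equiv 0$, the Lagrange--Zassenhaus root bound applied to the integer polynomial $f(n,V) \in \Z[V]$ gives $|m| \le C N^K$ for every integer root $m$, with $C$ and $K$ depending only on $f$, at least when $a_d(n) \ne 0$. Since $a_d$ is a non-zero integer polynomial, the exceptional $n \in [0, N)$ with $a_d(n) = 0$ are at most $\deg a_d$ in number, each contributing at most $d-1$ values of $m$ (absolute irreducibility of $f$ together with $d \ge 1$ ensures $f(n,V) \not\equiv 0$), so these exceptions contribute $O(1)$ solutions in total. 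Thus all but $O(1)$ solutions $(n, m)$ lie in the rectangular box $[0, N) \times [-C N^K, C N^K]$.

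I would then invoke Pila's bound~\cite{Pila} on integer points on an absolutely irreducible plane curve inside such a rectangular region. Applied to $\{f = 0\}$ of total degree $D \ge d \ge 2$ in the above box, this yields at most $O(N^{1/2 + o(1)})$ integer points, with the exponent $1/2$ arising from the lower bound $\deg_V f \ge 2$. Combined with the $O(1)$ exceptional contribution, this proves the lemma.

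The main obstacle is the potentially large aspect ratio of the rectangular box: a naive application of the symmetric-box Bombieri--Pila bound to a square of side $\sim N^K$ would only yield $O(N^{K/D + o(1)})$, which can exceed $N^{1/2+o(1)}$. Pila's refinement of~\cite{BombPila} resolves this by analysing the real branches of the algebraic function $V = V(U)$ above $[0, N)$ individually and covering each branch by polynomial arcs of controlled degree. The point is that, by absolute irreducibility of $f$ together with $\deg_V f \ge 2$, no such branch can coincide with an integer polynomial, and this is what ultimately produces the correct exponent $1/2$ regardless of the size of $K$.
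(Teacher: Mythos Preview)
Your overall strategy is the same as the paper's: obtain an a~priori bound on $|m|$ in terms of $N$, then count integer points in the resulting box via~\cite{Pila}. The paper, however, handles the box issue differently. It asserts in one line that any solution satisfies $m=O(n^{d/2})$, where $d=\deg f$ is the \emph{total} degree, and then applies Pila's square-box bound $O(T^{1/d+o(1)})$ with $T\asymp N^{d/2}$, reading off the exponent $\tfrac12$ immediately. Your Lagrange--Zassenhaus bound $|m|\le CN^K$ is derived more carefully but is in general weaker, and you are right that plugging it into the square-box theorem only gives $O(N^{K/D+o(1)})$, which need not be $\le N^{1/2+o(1)}$.

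The gap lies in your proposed remedy for this aspect-ratio problem. The result in~\cite{Pila} is stated for a square box $[-T,T]^2$ and gives exponent $1/D$ with $D$ the total degree; it does not directly yield an exponent $1/\deg_V f$ on a long thin rectangle $[0,N)\times[-CN^K,CN^K]$. Your heuristic that ``no real branch $V=V(U)$ coincides with an integer polynomial'' only rules out the trivial obstruction $\deg_V f=1$; it does not by itself control the determinant-method estimates underlying the Bombieri--Pila argument when the $V$-range is much larger than the $U$-range. To make the argument complete you would need either a precise reference for a rectangular Pila bound governed by $\deg_V f$, or an explicit reduction to the square-box case --- and the paper's (unproved) claim $m=O(n^{d/2})$ is exactly such a reduction in disguise.
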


\begin{proof} Let $d = \deg f$.
Since $\deg_V f \ge 2$ we see that for any solution to
the above equation we have $m = O(n^{d/2}) = O(N^{d/2})$.
Recalling that  by~\cite{Pila},  an absolutely irreducible polynomial
of degree $d$ has $O(T^{1/d + o(1)})$ solutions in
a box $[0,T]\times [0,T]$ (where the implied constants
depend only on $d$), we derive the result.
\end{proof}

\section{Proof of Theorem~\ref{thm:Distr}}

Let $T_f(N;x)$ be the number of solutions $(n,m)$ to
the congruence
\begin{equation}
\label{eq:congr}
f(n,m)\equiv 0 \pmod {M_f(x)}, \qquad
0 \le n < N,  \ 0 \le m < M_f(x).
\end{equation}

Using~\eqref{eq:Ident}, we write
\begin{eqnarray*}
T_f(N;x)  &= & \sum_{\substack{(n,m) \in \cZ_f(M_f(x))}}\
\sum_{k=0}^{N-1} \frac{1}{M_f(x)} \\
& & \qquad \qquad \qquad \quad
\sum_{-M_f(x)/2 < a \le M_f(x)/2}
\eMx(a(n-k)) \\
& = & \frac{1}{M_f(x)} \sum_{-M_f(x)/2 < a \le M_f(x)/2}\\
& & \qquad \qquad \qquad \quad
\sum_{\substack{(n,m) \in \cZ_f(M_f(x))}}\hspace{-4mm} \eMx(an)
\sum_{k=0}^{N-1}   \eMx(-ak).
\end{eqnarray*}
Separating the term corresponding to
$a = 0$, and recalling~\eqref{eq:Incompl}, we obtain
\begin{equation}
\label{eq:TQR}
T_f(N;x)  - \frac{N}{M_f(x)}\#\cZ_f(M_f(x))  = O\(R_f(N;x)\),
\end{equation}
where, as before, $\cZ_f(q)$ is defined by~\eqref{eq:Zfq}
and
$$
R_f(N;x) =
  \sum_{0 < |a| \le M_f(x)/2}  \frac{1}{|a|}
\left|\sum_{\substack{(n,m) \in \cZ_f(M_f(x))}} \eMx(an)\right|.
$$

To estimate $R_f(N;x)$, for every $d  \mid  M_f(x)$, we collect
together the values of $a$ with the same value $\gcd(a,M_f(x)) = d$
and write them as $a = db$, getting
$$
R_f(N;x) = \sum_{d \mid    M_f(x)}\frac{1}{d}
\sum_{\substack{0 < |b| \le M_f(x)/2d \\ \gcd(b, M_f(x)/d)=1}}
\frac{1}{|b|}
\left|\sum_{\substack{(n,m) \in \cZ_f(M_f(x))}} \eMx(dbn)\right|.
$$

Recalling Lemma~\ref{lem:CRT} and  then estimating
the corresponding exponential sums via Lemma~\ref{lem:Bomb}
for $p \nmid d$ (and using the trivial bound $\#  \cZ_f(p)$
for $p \mid d$),
we deduce
$$
\left|\sum_{\substack{(n,m) \in \cZ_f(M_f(x))}} \eMx(dbn)\right|
\le C^{\pi(x)}  (M_f(x)/d)^{1/2} d = C^{\pi(x)}  (M_f(x) d)^{1/2} ,
$$
where  $C$ is the implied constant of  Lemma~\ref{lem:Bomb} and, as
usual, $\pi(x)$ is the number of primes $p\le x$. Therefore,
using~\eqref{eq:Mx}
\begin{eqnarray*}
R_f(N;x) & \le & C^{\pi(x)} \sum_{d \mid    M_f(x)}\frac{1}{d}
\sum_{\substack{0 < |b| \le M_f(x)/2d \\ \gcd(b, M_f(x)/d)=1}}
\frac{ (M_f(x) d)^{1/2} }{|b|} \\
& = & C^{\pi(x)} M_f(x)^{1/2} \sum_{d \mid    M_f(x)}\frac{1}{d^{1/2}}
\sum_{\substack{0 < |b| \le M_f(x)/2d \\ \gcd(b, M_f(x)/d)=1}}
\frac{ 1}{|b|} \\
& =  & O\( (2C)^{\pi(x)} M_f(x)^{1/2} \log M_f(x) \)
=O\( M_f(x)^{1/2+o(1)}\),
\end{eqnarray*}
as trivially
$$
\sum_{d \mid    M_f(x)}\frac{1}{d^{1/2}}
 \le \sum_{d \mid    M_f(x)} 1 \le 2^{\pi(x)}.
$$
Thus, we see from~\eqref{eq:TQR} that
$$
\left|T_f(N;x)  - \frac{N}{M_f(x)}\#\cZ_f(M_f(x))\right|
\le M_f(x)^{1/2+o(1)}.
$$
Furthermore, by Corollary~\ref{cor:Z-Asymp} we see that
for any fixed $\varepsilon> 0$, taking
$N = \rf{M_f(x)^{1/2+\varepsilon}}$ we obtain
$$
T_f(N;x) = N^{1 + o(1)}.
$$
By the Chinese remaindering theorem we see that for every fixed $n$
there are no more that $d^{\pi(x)}$ solutions $m$ to the congruence
$f(n,m)\equiv 0 \pmod {M_f(x)}$, $0 \le m < M_f(x)$. Thus we have at
least
$$
T_f(N;x) d^{-\pi(x)} =  T_f(N;x)M_f(x)^{o(1)}= N^{1 + o(1)}.
$$
values of $n$ for which the congruence $f(n,m)\equiv 0 \pmod {M_f(x)}$,
has a solution.
Using Lemma~\ref{lem:Equation} we see that
there is a  solution $(n,m)$ to~\eqref{eq:congr}
with $f(n,m) \ne 0$. Since $\varepsilon$ is arbitrary.
Since $\varepsilon$ is arbitrary,
this concludes the proof.

\section{Comments}
\label{sec:Com}

It is easy to see that all implicit constants in our estimates can be
efficiently evaluated. For example, see~\cite{GaoRod,Rup,Zan}
for explicit versions of the Ostrowski theorem.

We remark that besides pseudosquares, in a number of
works~\cite{BLSW,BKPS,KPS} the notion of pseudopowers has been
studied.  Namely, following  Bach, Lukes, Shallit and
Williams~\cite{BLSW}, we say that  an integer $n>0$ is  an {\it
$x$-pseudopower to  base $g$\/} (for a given integer $g$ with
$|g|\ge 2$) if $n$ is not a power of $g$ over the integers but is a
power of $g$  modulo all primes $p\le x$, that is, if for all primes
$p\le x$ there exists an integer $k_p\ge 0$ such that $n\equiv
g^{k_p} \pmod p $.

The notion of pseudopowers naturally extends to elliptic curves.
More precisely, given an elliptic curve $E$ over $\Q$ and a
rational point $P \in E(\Q)$ we say that $Q$ is an
$x$-pseudomultiple of $P$  if for every prime $p \le x$ at which $E$
has good reduction, there is an integer $k_p\ge 0$ so that $ Q
\equiv k_p P \pmod p$ but $Q$ is not of the form $Q = kP+T$ for some
integer $k$ and some torsion point $T$. Clearly $E$ has to be of
rank at least 2 for $x$-pseudomultiple to exist. Obtaining upper
bounds on the canonical height of the smallest pseudopowers is a
natural and challenging questions.

\section*{Acknowledgements.}

The authors are grateful to Joe Silverman for the idea of the
question on $x$-pseudomultiples on elliptic
curves in Section~\ref{sec:Com}; in fact this also
gave us the idea of defining and studying $x$-pseudopoints
on curves.

Both authors were supported in part
by ARC Grant DP0881473.

\end{document}